\title{Products of Random Matrices and Queueing System Performance Evaluation\thanks{Proc. 4th St.~Petersburg Workshop on Simulation / Ed. by S.~M.~Ermakov, Yu.~N.~Kashtanov, V.~B.~Melas, NII Chemistry St.~Petersburg University Publishers, St.~Petersburg, 2001, pp.~304--309.}} 
\author{N.~K.~Krivulin\thanks{Faculty of Mathematics and Mechanics, St.~Petersburg State University, 28 Universitetsky Ave., St.~Petersburg, 198504, Russia, 
nkk@math.spbu.ru.} \thanks{The work was partially supported by the Russian Foundation for Basic Research, Grant~\#00-01-00760.}
}
\date{}
\newtheorem{theorem}{Theorem}
\newtheorem{lemma}[theorem]{Lemma}
\begin{document}

\maketitle

\begin{abstract}
We consider (max,+)-algebra products of random matrices, which arise from 
performance evaluation of acyclic fork-join queueing networks. A new algebraic
technique to examine properties of the product and investigate its limiting
behaviour is proposed based on an extension of the standard matrix 
(max,+)-algebra by endowing it with the ordinary matrix addition as an 
external operation. As an application, we derive bounds on the (max,+)-algebra 
maximal Lyapunov exponent which can be considered as the cycle time of the 
networks.
\\

\textit{Key-Words:} (max,+)-algebra, product of random matrices, maximal Lyapunov 
exponent, acyclic fork-join networks, cycle time
\end{abstract}

\section{Introduction}
We consider $(\max,+)$-algebra products of random matrices arising from 
performance evaluation of acyclic fork-join queueing networks. The problem is 
to examine limiting behaviour of the product so as to evaluate its limiting 
matrix and the maximal Lyapunov exponent normally referred to as the system 
cycle time.

In order to investigate the products, we develop a pure algebraic technique 
similar to those involved in the conventional linear algebra. The technique is 
based on an extension of the standard matrix $(\max,+)$-algebra 
\cite{Vorobjev1967Extremal,Cuninghame-Green1979Minimax,Baccelli1993Synchronization,Maslov1994Idempotent} by endowing it with the ordinary matrix 
addition as an external operation. New properties of the extended algebra are 
then established in the form of inequalities, which may find their 
applications beyond of the scope of the current topic. We conclude the paper 
with an example of application of the proposed technique to establish bounds 
on the cycle time and on its related limiting matrix in fork-join queueing 
networks.

In fact, there exist similar results on evaluation of the Lyapunov exponent 
(see, e.g., \cite{Baccelli1993Synchronization} and references therein). However, they are 
essentially based on the description of system dynamics and related proofs 
made in terms of either Petri nets or stochastic events graphs. On the 
contrary, we exploit a different approach (see \cite{Krivulin2000Algebraic} for farther 
details) based on pure algebraic techniques. It allows one to write and handle 
the dynamic equations directly without having recourse to an intermediate 
description in the Petri nets or in another tedious language. 

\section{Motivating Example and Algebraic Model}\label{S-AFJQ}
Consider a network of $ n $ nodes, with its topology described by an 
oriented acyclic graph. The nodes that have no predecessors are assumed to 
represent an infinite external arrival stream of customers. Each node without 
successors is considered as an output node which releases customers from the 
network.

Each node has a server and infinite buffer operating as a single-server queue
under the first-come, first-served discipline. At the initial time, the
servers and their buffers are assumed to be free of customers, except for the
buffers in nodes with no predecessors, each assumed to have an infinite number 
of customers.

The operation of each node can include join and fork operations which are 
performed respectively before and after service. The join operation is 
actually thought to cause each customer which comes into a node not to enter 
the queue but to wait until at least one customer from all preceding nodes 
arrives. Upon arrival, these customers are replaced by a new customer which
joins the queue.

The fork operation at a node is initiated every time the service of a customer 
is completed. It consists in replacing the customer by several new customers, 
each intended to go to one of the subsequent nodes. 

For the queue at node $ i $, we denote the $k$th departure epochs by 
$ x_{i}(k) $, and the $k$th service time by $ \tau_{ik} $. We assume 
$ \tau_{ik} $ to be a given nonnegative random variable (r.v.) for all 
$ i=1,\ldots,n $, and $ k=1,2,\ldots $

We are interested in evaluating the limit
$$
\gamma=\lim_{k\to\infty}\frac{1}{k}\max_{i}x_{i}(k),
$$
which is normally referred to as the cycle time of the network. 

In order to represent the network dynamics in a form suitable for further
analysis, we exploit the idempotent $(\max,+)$-algebra based approach 
developed in \cite{Krivulin2000Algebraic}.

The $(\max,+)$-algebra \cite{Vorobjev1967Extremal,Cuninghame-Green1979Minimax,Baccelli1993Synchronization} presents a triple
$ \langle R_{\varepsilon},\oplus,\otimes\rangle $ with
$ R_{\varepsilon}=R \cup \{\varepsilon\} $,
$ \varepsilon=-\infty $, and operations $ \oplus $ and $ \otimes $
defined for all $ x,y\in R_{\varepsilon} $ as
$$
x \oplus y=\max(x,y), \quad x \otimes y=x + y.
$$

The $(\max,+)$-algebra of matrices is introduced in the ordinary way. The 
square matrix $ {\cal E} $ with all its elements equal $ \varepsilon $ 
presents the null matrix, whereas the matrix $ E={\rm diag}(0,\ldots,0) $ 
with $ \varepsilon $ as its off-diagonal elements is the identity.

Let us denote the vector of the $k$th customer departures from the network 
nodes by 
$ \bm{x}(k)=(x_{1}(k),\ldots,x_{n}(k))^{T} $, and introduce 
the matrix $ {\cal T}_{k}={\rm diag}(\tau_{1k},\ldots,\tau_{nk}) $ with 
all its off-diagonal elements equal $ \varepsilon $. 
 
As it has been shown in \cite{Krivulin2000Algebraic}, the dynamics of acyclic 
fork-join networks can be described by the stochastic difference equation
\begin{equation}\label{E-SDEQ}
\bm{x}(k)=A(k)\otimes\bm{x}(k-1), 
\qquad
A(k)
=
\bigoplus_{j=0}^{p}({\cal T}_{k}\otimes G^{T})^{j}\otimes{\cal T}_{k},
\end{equation}
where $ G $ is a matrix with the elements
$$
g_{ij}=\left\{\begin{array}{ll}
        0, & \mbox{if there exists arc $ (i,j) $ in the network graph}, \\
        \varepsilon, & \mbox{otherwise},
       \end{array}\right.
$$
and $ p $ is the length of the longest path in the graph. 

The matrix $ G $ is normally referred to as the support matrix of the
network. Note that since the network graph is acyclic, we have 
$ G^{q}={\cal E} $ for all $ q>p $.

Consider the service cycle time $ \gamma $. Now we can represent it as
$$
\gamma=\lim_{k\to\infty}\frac{1}{k}\|\bm{x}(k)\|,
$$
where $ \|\bm{x}(k)\|=\max_{i}x_{i}(k) $.

In order to get information about the growth rate of 
$ \bm{x}(k) $, we will examine the limiting behaviour of the 
matrix 
$$
A_{k}
=
A(k)\otimes\cdots\otimes A(1)
=
\bigotimes_{i=1}^{k}\bigoplus_{j=0}^{p}
({\cal T}_{k}\otimes G^{T})^{j}\otimes{\cal T}_{k}.
$$

\section{Distributivity Properties and Matrix Products}\label{S-DIPR}

Let $ A_{ij} $ be $(n\times n)$-matrices for all $ i=1,\ldots,k $ and
$ j=1,\ldots,m $. Distributivity of the operation $ \otimes $ over
$ \oplus $ immediately gives the equality
\begin{equation}\label{E-DMOA}
\bigotimes_{i=1}^{k}\bigoplus_{j=1}^{m} A_{ij}
=
\bigoplus_{1\leq j_{1},\ldots,j_{k}\leq m}
A_{1j_{1}}\otimes\cdots\otimes A_{kj_{k}},
\end{equation}
which leads, in particular, to the inequality
\begin{equation}\label{I-DMOA}
\bigotimes_{i=1}^{k}\bigoplus_{j=1}^{m} A_{ij}
\geq
\bigoplus_{j=1}^{m}\bigotimes_{i=1}^{k} A_{ij}.
\end{equation}

We consider the ordinary matrix addition $ + $ as an external operation,
and assume $ \otimes $ and $ \oplus $ to take precedence over $ + $. 
In a similar way as above, we have
\begin{eqnarray}
\sum_{i=1}^{k}\bigoplus_{j=1}^{m} A_{ij}
&=&
\bigoplus_{1\leq j_{1},\ldots,j_{k}\leq m}(A_{1j_{1}}+\cdots+A_{kj_{k}}),
\label{E-DAOM} \\
\sum_{i=1}^{k}\bigoplus_{j=1}^{m} A_{ij}
&\geq&
\bigoplus_{j=1}^{m}\sum_{i=1}^{k} A_{ij}.\label{I-DAOM}
\end{eqnarray}

Let $ G_{1} $ and $ G_{2} $ be support matrices. For any matrices 
$ A $ and $ B $, we have
\begin{equation}\label{E-GABG}
G_{1}\otimes(A+B)\otimes G_{2} 
\leq
G_{1}\otimes A\otimes G_{2}+G_{1}\otimes B\otimes G_{2}. 
\end{equation}

Assume $ D_{1} $ and $ D_{2} $ to be diagonal matrices with all 
off-diagonal elements equal $ \varepsilon $. Then for any matrices $ A $ 
and $ B $, it holds
\begin{equation}\label{E-DPOA}
D_{1}\otimes(A+B)\otimes D_{2}
=
D_{1}\otimes A\otimes D_{2}+B
=
D_{1}\otimes A+B\otimes D_{2}
=
A+D_{1}\otimes B\otimes D_{2}.
\end{equation}

Now we examine products of alternating diagonal and support matrices denoted
respectively by $ D $ and $ G $, which take the form
$$
D\otimes\underbrace{(G\otimes D)\otimes\cdots
\otimes(G\otimes D)}_{\mbox{\scriptsize $m$ times}}
=
D\otimes(G\otimes D)^{m}.
$$

In order to simplify further formulas, we introduce the following notations
$$
\Phi_{j}(D)=D\otimes(G\otimes D)^{j}, 
\qquad
\Psi_{i}^{j}(D)=G^{i}\otimes D\otimes G^{j}.
$$

First assume the diagonal matrices to have both positive and negative entries
on the diagonal. The next lemma can be proved using (\ref{E-GABG}) and
induction on $ m $.

\begin{lemma}\label{L-DP2S}
It holds that
$$
\Phi_{m}(D)
\leq
\sum_{j=0}^{m}\Psi_{j}^{m-j}(D).
$$
\end{lemma}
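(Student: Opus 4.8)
The plan is to prove the inequality by induction on $m$, using the recursive structure $\Phi_{m}(D) = \Phi_{m-1}(D)\otimes G\otimes D$ together with the subdistributivity property~\eqref{E-GABG}. First I would check the base case: for $m=0$ we have $\Phi_{0}(D) = D$ and the right-hand side is $\Psi_{0}^{0}(D) = G^{0}\otimes D\otimes G^{0} = E\otimes D\otimes E = D$ (recalling $G^{0}=E$), so equality holds trivially. It is also worth recording the case $m=1$ as a sanity check: $\Phi_{1}(D) = D\otimes G\otimes D$, which by~\eqref{E-DPOA} (with $D_{1}=D_{2}=D$, writing $D\otimes G\otimes D = D + D\otimes G\otimes D$ only loosely) should be bounded by $\Psi_{0}^{1}(D) + \Psi_{1}^{0}(D) = D\otimes G + G\otimes D$; this would follow from~\eqref{E-GABG} applied with $A$ and $B$ chosen so that $A+B$ reconstructs $D\otimes G\otimes D$ after flanking by identity-like factors, or more directly from the inductive step below.

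For the inductive step, I would assume $\Phi_{m-1}(D) \leq \sum_{j=0}^{m-1}\Psi_{j}^{m-1-j}(D)$ and aim to show the corresponding bound for $\Phi_{m}$. Writing $\Phi_{m}(D) = \Phi_{m-1}(D)\otimes G\otimes D$ and applying the inductive hypothesis (monotonicity of $\otimes$ with respect to $\leq$, which holds in the $(\max,+)$ matrix algebra), I get
$$
\Phi_{m}(D) \leq \left(\sum_{j=0}^{m-1}\Psi_{j}^{m-1-j}(D)\right)\otimes G\otimes D.
$$
Now the key move: the left factor is a sum (in the external $+$ operation) of matrices, and I want to distribute the trailing $\otimes G\otimes D$ across this sum. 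Here~\eqref{E-GABG} is the relevant tool — but it is stated as $G_{1}\otimes(A+B)\otimes G_{2} \leq G_{1}\otimes A\otimes G_{2} + G_{1}\otimes B\otimes G_{2}$, i.e. it needs a support matrix on \emph{both} sides. The trailing piece $G\otimes D$ is not a support matrix, but $D$ is diagonal, so I would first peel off $D$ using the exact identity in~\eqref{E-DPOA} and handle the remaining multiplication by $G$ on the left and $G$ (from within $\Psi$) structurally; alternatively, observe that each summand $\Psi_{j}^{m-1-j}(D) = G^{j}\otimes D\otimes G^{m-1-j}$ already has the form (support matrix)$\,\otimes D\otimes\,$(support matrix), so multiplying on the right by $G\otimes D$ gives $G^{j}\otimes D\otimes G^{m-j}\otimes D = \Psi_{j}^{m-j}(D)\otimes D$, and then~\eqref{E-DPOA} collapses the trailing $\otimes D$ into the diagonal adjustment, yielding exactly $\Psi_{j}^{m-j}(D)$ up to the diagonal bookkeeping. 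Summing over $j=0,\ldots,m-1$ and reindexing produces $\sum_{j=0}^{m-1}\Psi_{j}^{m-j}(D)$, and I still need the missing $j=m$ term $\Psi_{m}^{0}(D) = G^{m}\otimes D$; this should emerge from correctly tracking the innermost $D$ in $\Phi_{m-1}(D)$ before it was bounded — i.e. the induction should really be set up so the leading $D$ contributes the $j=0$ end and the structure of $(G\otimes D)^{m}$ naturally generates all $m+1$ terms.

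The main obstacle I anticipate is precisely this bookkeeping at the \emph{ends} of the telescoping sum: making sure that distributing $\otimes G\otimes D$ over the $m$ summands of the inductive hypothesis produces $m+1$ summands (indices $0$ through $m$), not $m$. The cleanest way around this is probably to strengthen or restate the induction so that at each stage the bound reads $\Phi_{m}(D) \leq \bigoplus$-free sum over all ways of inserting one $D$ among $m$ copies of $G$ — which is manifestly what $\sum_{j=0}^{m}\Psi_{j}^{m-j}(D)$ encodes — and then the step $(G\otimes D)^{m} = (G\otimes D)^{m-1}\otimes G\otimes D$ simply shifts each $G$-block and the fresh $D$ lands either absorbed into an existing $\Psi$-term via~\eqref{E-DPOA} or as the new extreme term $G^{m}\otimes D$. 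A secondary technical point is verifying that~\eqref{E-GABG} genuinely applies when one "flanking matrix" is $E$ (the $(\max,+)$ identity), which is a degenerate support matrix with zeros on the diagonal; this should be harmless since $E$ is both diagonal and a support matrix, but it is worth stating explicitly so the reader sees why~\eqref{E-GABG} covers $G_{1}=E$ or $G_{2}=E$.
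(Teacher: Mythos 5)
Your overall strategy --- induction on $m$ via the recursion $\Phi_{m}(D)=\Phi_{m-1}(D)\otimes G\otimes D$, monotonicity of $\otimes$, and the subdistributivity inequality~(\ref{E-GABG}) --- is exactly the route the paper indicates, and your base case is correct. However, the inductive step as you describe it does not close, and one of its ingredients is misstated. Identity~(\ref{E-DPOA}) does not ``collapse'' a trailing $\otimes D$: it says that for an \emph{external} sum the diagonal factor attaches to exactly one summand, $(A+B)\otimes D=A+B\otimes D$; in particular $\Psi_{j}^{m-j}(D)\otimes D\neq\Psi_{j}^{m-j}(D)$ in general, so you cannot turn each of the $m$ summands $\Psi_{j}^{m-1-j}(D)\otimes G\otimes D$ into $\Psi_{j}^{m-j}(D)$ and then conjure the extra term $\Psi_{m}^{0}(D)$ separately. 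As written, the step yields $m$ terms each carrying a spurious trailing $D$, not the required $m+1$ clean terms, and you explicitly leave the appearance of the $(m+1)$st term as something that ``should emerge.''

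The missing idea is the idempotency of support-matrix entries under the ordinary addition $+$: every entry of $G^{m}$ is $0$ or $\varepsilon$, so $G^{m}+G^{m}=G^{m}$, and hence by~(\ref{E-DPOA}) one has $D\otimes G^{m}\otimes D=D\otimes(G^{m}+G^{m})\otimes D=D\otimes G^{m}+G^{m}\otimes D=\Psi_{0}^{m}(D)+\Psi_{m}^{0}(D)$. With this in hand the step goes through: from the inductive hypothesis and monotonicity, $\Phi_{m}(D)\leq\bigl(\sum_{j=0}^{m-1}\Psi_{j}^{m-1-j}(D)\bigr)\otimes G\otimes D$; applying~(\ref{E-GABG}) with $G_{1}=E$, $G_{2}=G$ (extended to $m$ summands) and then right-multiplying by $D$ gives $\Phi_{m}(D)\leq\bigl(\sum_{j=0}^{m-1}\Psi_{j}^{m-j}(D)\bigr)\otimes D$; finally~(\ref{E-DPOA}) lets you attach the trailing $D$ to the single summand $\Psi_{0}^{m}(D)=D\otimes G^{m}$, leaving the others untouched, and that one summand splits into the two boundary terms $\Psi_{0}^{m}(D)+\Psi_{m}^{0}(D)$ by the identity above. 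That is precisely where the count goes from $m$ to $m+1$, and it is the piece your write-up lacks. Your side worry about using~(\ref{E-GABG}) with a flanking factor $E$ is harmless, as you suspected, since $E$ has all entries in $\{0,\varepsilon\}$.
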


Furthermore, assuming $ D_{i} $, $ i=1,\ldots,k $, to be diagonal
matrices, one can obtain the next result based on Lemma~\ref{L-DP2S} and 
inequality (\ref{E-GABG}).
\begin{lemma}\label{L-DO2S}
Let $ m_{1},\ldots,m_{k} $ be integers, and $ m=m_{1}+\cdots+m_{k} $. 
Then it holds
$$
\bigotimes_{i=1}^{k}\Phi_{m_{i}}(D_{i})
\leq
\sum_{i=1}^{k}\sum_{j=M_{i-1}}^{M_{i}}\Psi_{j}^{m-j}(D_{i})
$$
with $ M_{0}=0 $, $ M_{i}=m_{1}+\cdots+m_{i} $, $ i=1,\ldots, k $.
\end{lemma}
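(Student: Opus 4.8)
The plan is to keep the whole product as a single chain of alternating support and diagonal matrices, apply to that chain the argument already used for Lemma~\ref{L-DP2S}, and split the resulting terms only at the very end, where powers of $G$ flank the sums so that (\ref{E-GABG}) applies. Concretely, I would first record a variable-diagonal refinement of Lemma~\ref{L-DP2S}: for \emph{arbitrary} diagonal matrices $\Delta_{0},\ldots,\Delta_{m}$,
$$
\Delta_{0}\otimes(G\otimes\Delta_{1})\otimes\cdots\otimes(G\otimes\Delta_{m})
\leq
\sum_{t=0}^{m}G^{t}\otimes\Delta_{t}\otimes G^{m-t}.
$$
This is obtained by the same induction on $m$ that proves Lemma~\ref{L-DP2S}: peel off the trailing factor $G\otimes\Delta_{m}$, apply the inductive hypothesis, move the trailing $G$ inside the sum by (\ref{E-GABG}) with an identity factor, absorb the trailing $\Delta_{m}$ into the $t=0$ summand by (\ref{E-DPOA}), and split the arising term $\Delta_{0}\otimes G^{m}\otimes\Delta_{m}$ into $\Delta_{0}\otimes G^{m}+G^{m}\otimes\Delta_{m}$ by (\ref{E-DPOA}) once more; these two pieces supply the missing $t=0$ and $t=m$ terms.

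Next I would rewrite the left-hand side of the lemma as such a chain. Expanding, $\Phi_{m_{i}}(D_{i})=D_{i}\otimes G\otimes D_{i}\otimes\cdots\otimes G\otimes D_{i}$ with $m_{i}$ copies of $G$, and the junction of consecutive blocks puts two adjacent diagonal matrices $D_{i}\otimes D_{i+1}$ side by side. Hence $\bigotimes_{i=1}^{k}\Phi_{m_{i}}(D_{i})=\Delta_{0}\otimes(G\otimes\Delta_{1})\otimes\cdots\otimes(G\otimes\Delta_{m})$, where the $t$-th diagonal factor gathers the contributions of every block whose range of $G$-positions contains $t$, namely $\Delta_{t}=\bigotimes_{i:\,M_{i-1}\leq t\leq M_{i}}D_{i}$. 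Since the $\otimes$-product of diagonal matrices coincides with their ordinary sum, $\Delta_{t}=\sum_{i:\,M_{i-1}\leq t\leq M_{i}}D_{i}$.

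Applying the refinement yields $\bigotimes_{i=1}^{k}\Phi_{m_{i}}(D_{i})\leq\sum_{t=0}^{m}G^{t}\otimes\Delta_{t}\otimes G^{m-t}$. For each $t$, since $G^{t}$ and $G^{m-t}$ are support matrices while $\Delta_{t}$ is an ordinary sum, repeated use of (\ref{E-GABG}) gives $G^{t}\otimes\Delta_{t}\otimes G^{m-t}\leq\sum_{i:\,M_{i-1}\leq t\leq M_{i}}G^{t}\otimes D_{i}\otimes G^{m-t}=\sum_{i:\,M_{i-1}\leq t\leq M_{i}}\Psi_{t}^{m-t}(D_{i})$. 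Summing over $t$ and reindexing the resulting sum over the pairs $(i,j)$ with $M_{i-1}\leq j\leq M_{i}$ produces exactly $\sum_{i=1}^{k}\sum_{j=M_{i-1}}^{M_{i}}\Psi_{j}^{m-j}(D_{i})$, which is the claim.

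The only real obstacle, and the reason for the detour through a single chain, is the discipline of not breaking up any sum until it is sandwiched between powers of $G$. Applying Lemma~\ref{L-DP2S} to the factors $\Phi_{m_{i}}(D_{i})$ one by one would immediately force a $\otimes$-product of ordinary sums, and the tempting inequality $(\sum_{a}A_{a})\otimes(\sum_{b}B_{b})\leq\sum_{a,b}A_{a}\otimes B_{b}$ is false for matrices with negative entries; merging everything into one $\Phi$-like chain first is precisely what postpones all splitting to the point where (\ref{E-GABG}) is legitimately available.
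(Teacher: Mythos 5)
Your proof is correct, and it is consistent with the route the paper indicates (the paper states this lemma without proof, saying only that it rests on Lemma~\ref{L-DP2S} and inequality (\ref{E-GABG})): the variable-diagonal chain inequality you establish by induction is exactly the generalization of Lemma~\ref{L-DP2S} needed here, and the bookkeeping with $\Delta_{t}=\bigotimes_{i:\,M_{i-1}\leq t\leq M_{i}}D_{i}$ correctly reproduces the overlapping index ranges $M_{i-1}\leq j\leq M_{i}$ on the right-hand side. Your closing remark --- that applying Lemma~\ref{L-DP2S} to each factor separately and multiplying the resulting ordinary sums would require the false inequality $(\sum_{a}A_{a})\otimes(\sum_{b}B_{b})\leq\sum_{a,b}A_{a}\otimes B_{b}$, since the diagonal entries may be negative --- pinpoints the real subtlety and justifies the single-chain formulation.
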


Let the matrices $ D_{1},\ldots,D_{k} $ have only nonnegative elements on
the diagonal. With (\ref{E-DPOA}) and (\ref{E-GABG}), one can prove the next
lemma.
\begin{lemma}\label{L-DPLD}
Suppose that $ m_{1}+\cdots+m_{r}=m_{r+1}+\cdots+m_{k}=m $ with
$ m-m_{r}\leq m_{r+1} $ for some $ r $. Then for any integer $ s $
such that $ m-m_{r}\leq s\leq m_{r+1} $, it holds
$$
\bigotimes_{i=1}^{r}\Phi_{m_{i}}(D_{i})
+
\bigotimes_{i=r+1}^{k}\Phi_{m_{i}}(D_{i})
\geq
\bigotimes_{i=1}^{k}\Phi_{s_{i}}(D_{i})
$$
with $ s_{1}+\cdots+s_{k}=m $, and
$$
s_{i}
=\left\{
   \begin{array}{ll}
    m_{i},     & \mbox{if $ 1\leq i<r$}, \\
    s-m+m_{r}, & \mbox{if $ i=r$}, \\
    m_{r+1}-s, & \mbox{if $ i=r+1$}, \\
    m_{i},     & \mbox{if $ r+1<i\leq k$}.
   \end{array}
 \right.
$$
\end{lemma}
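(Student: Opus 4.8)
The plan is to argue entrywise, reading each $(\max,+)$-product as a maximum-weight path problem on the network graph. First I would record the following combinatorial description: for diagonal matrices $C_1,\dots,C_k$ with nonnegative diagonals and integers $n_i\ge 0$, $n=n_1+\cdots+n_k$, expanding
$$
\bigotimes_{i=1}^{k}\Phi_{n_i}(C_i)
=C_1\otimes(G\otimes C_1)^{n_1}\otimes\cdots\otimes C_k\otimes(G\otimes C_k)^{n_k}
$$
by distributivity (as in (\ref{E-DMOA})) shows that its $(a,b)$ entry equals the maximum, over all paths $a=v_0\to v_1\to\cdots\to v_n=b$ of length $n$ in the graph, of $\sum_{i=1}^{k}\sum_{l=N_{i-1}}^{N_i}(C_i)_{v_l v_l}$, where $N_0=0$ and $N_i=n_1+\cdots+n_i$, the value being $\varepsilon$ when no such path exists. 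Thus the vertices $v_{N_{i-1}},\dots,v_{N_i}$ are weighted by $C_i$, and each junction vertex $v_{N_i}$ is counted twice, for $C_i$ and for $C_{i+1}$. Since $m_1+\cdots+m_r=m_{r+1}+\cdots+m_k=m=s_1+\cdots+s_k$, all three products in the lemma describe paths of the same length $m$, which is what makes the comparison possible.

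Next I would fix an entry $(a,b)$. If the right-hand side is $\varepsilon$ there, nothing is to prove; otherwise pick a length-$m$ path $a=x_0\to\cdots\to x_m=b$ realizing the maximum for $\bigotimes_{i=1}^{k}\Phi_{s_i}(D_i)$ at $(a,b)$. This same path is admissible for $\bigotimes_{i=1}^{r}\Phi_{m_i}(D_i)$ and for $\bigotimes_{i=r+1}^{k}\Phi_{m_i}(D_i)$, so their $(a,b)$ entries are at least their weights along $x$. Put $t=m-m_r=m_1+\cdots+m_{r-1}$. Because $s_i=m_i$ for $i<r$, the $D_1,\dots,D_{r-1}$ segments of $x$ agree in $\bigotimes_{i=1}^{k}\Phi_{s_i}(D_i)$ and in $\bigotimes_{i=1}^{r}\Phi_{m_i}(D_i)$; and since $s_r=s-t$, the $D_r$ segment is $x_t,\dots,x_s$ in the former and $x_t,\dots,x_m$ in the latter. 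Symmetrically, $s_i=m_i$ for $i>r+1$ makes the $D_{r+2},\dots,D_k$ segments agree in $\bigotimes_{i=1}^{k}\Phi_{s_i}(D_i)$ and in $\bigotimes_{i=r+1}^{k}\Phi_{m_i}(D_i)$, and since $s_{r+1}=m_{r+1}-s$ the $D_{r+1}$ segment is $x_s,\dots,x_{m_{r+1}}$ in the former and $x_0,\dots,x_{m_{r+1}}$ in the latter. The hypotheses $m-m_r\le m_{r+1}$ and $m-m_r\le s\le m_{r+1}$ are exactly what gives $0\le t\le s\le m_{r+1}\le m$, hence $0\le s_r\le m_r$ and $0\le s_{r+1}\le m_{r+1}$, so all these segments are legitimate.

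Then, adding the two weights along $x$ and subtracting the optimal value (the weight of $\bigotimes_{i=1}^{k}\Phi_{s_i}(D_i)$ along $x$), the common $D_1,\dots,D_{r-1}$ and $D_{r+2},\dots,D_k$ contributions cancel, leaving exactly
$$
\sum_{l=s+1}^{m}(D_r)_{x_l x_l}\;+\;\sum_{l=0}^{s-1}(D_{r+1})_{x_l x_l},
$$
which is $\ge 0$ because $D_r$ and $D_{r+1}$ have nonnegative diagonals. Hence the $(a,b)$ entry of $\bigotimes_{i=1}^{r}\Phi_{m_i}(D_i)+\bigotimes_{i=r+1}^{k}\Phi_{m_i}(D_i)$ dominates that of $\bigotimes_{i=1}^{k}\Phi_{s_i}(D_i)$, and the lemma follows since $(a,b)$ was arbitrary. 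The same computation can instead be recast purely algebraically using the distributivity-type identities established above: split off the common prefix $\bigotimes_{i<r}\Phi_{m_i}(D_i)$ and suffix $\bigotimes_{i>r+1}\Phi_{m_i}(D_i)$, use $\Phi_{m_r}(D_r)=\Phi_{s_r}(D_r)\otimes(G\otimes D_r)^{m-s}$ together with the analogous splitting of $\Phi_{m_{r+1}}(D_{r+1})$, and finish with (\ref{E-DPOA}), (\ref{E-GABG}) and the elementary bound $D\otimes Y\otimes D'\ge Y$ valid for diagonal $D,D'$ with nonnegative diagonals.

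The hardest part will be the index bookkeeping in the middle step: correctly attributing each vertex of the path $x$ to the right factor, treating consistently the junction vertices that two consecutive blocks share, and verifying that the constraints on $m_r$, $m_{r+1}$ and $s$ keep every block segment inside $\{0,\dots,m\}$ — after which the nonnegativity of the diagonals does the rest. Without that last hypothesis the leftover sum above could be negative and the inequality would fail, which is why it appears in the statement.
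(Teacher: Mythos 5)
Your argument is correct. The paper itself gives no written proof of Lemma~\ref{L-DPLD}; it only remarks that the lemma "can be proved with (\ref{E-DPOA}) and (\ref{E-GABG})", i.e.\ by a purely algebraic manipulation inside the extended $(\max,+)$-algebra. Your primary route is genuinely different: you unfold each product $\bigotimes_i\Phi_{n_i}(C_i)$ entrywise as a maximum-weight path problem, observe that all three products in the inequality range over the \emph{same} set of length-$m$ paths from $a$ to $b$ (this is the real content of the hypothesis $m_1+\cdots+m_r=m_{r+1}+\cdots+m_k=m=s_1+\cdots+s_k$), evaluate both sides along an optimal path for the right-hand side, and check that the surplus is $\sum_{l=s+1}^{m}(D_r)_{x_lx_l}+\sum_{l=0}^{s-1}(D_{r+1})_{x_lx_l}\geq 0$. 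The bookkeeping is right, including the double counting of junction vertices (which cancels because the junction at $x_s$ contributes $(D_r)_{x_sx_s}+(D_{r+1})_{x_sx_s}$ to both sides) and the role of $m-m_r\leq s\leq m_{r+1}$ in keeping $s_r,s_{r+1}\geq 0$; you also correctly isolate where nonnegativity of the diagonals is indispensable. What the combinatorial route buys is transparency: one sees exactly which service times are "given away" when passing from the two separate products to the merged one. What the paper's algebraic route buys is uniformity with the rest of Section~\ref{S-DIPR} — the identity $\Phi_{m_r}(D_r)=\Phi_{s_r}(D_r)\otimes(G\otimes D_r)^{m-s}$, the commutation rule (\ref{E-DPOA}) and the bound $D\otimes Y\otimes D^{\prime}\geq Y$ for nonnegative diagonal $D,D^{\prime}$ yield the same inequality without leaving the matrix calculus, which is the style the paper advertises; you sketch this recast correctly in your closing remarks, so both doors are open.
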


\section{Subadditivity Property and Algebraic Bounds}\label{S-SPAB}

Consider the family
$ \{A_{lk}^{T}| l,k=0,1,\ldots; l<k\} $ of matrices
$$
A_{lk}^{T}
=
A^{T}(l+1)\otimes\cdots\otimes A^{T}(k), \quad A_{0k}^{T}=A_{k}^{T}.
$$

The next lemma states that the family $ \{A_{lk}^{T}\} $ possesses
subadditivity property.
\begin{lemma}\label{L-SUBA}
For all $ l<r<k $, it holds
$$
A_{lk}^{T}
\leq
A_{lr}^{T}+A_{rk}^{T}.
$$
\end{lemma}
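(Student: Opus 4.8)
The plan is to reduce the statement to the distributivity and comparison machinery already assembled in Sections 3 and 4. First I would unfold the definition of $A_{lk}^{T}$. Each factor $A^{T}(i)$ equals $\bigoplus_{j=0}^{p}(\mathcal{T}_{i}\otimes G^{T})^{j}\otimes\mathcal{T}_{i}$ transposed, which we can write in the $\Phi$-notation as $\bigoplus_{j=0}^{p}\Phi_{j}(\mathcal{T}_{i})$ with $G$ playing the role of the support matrix (using that $\mathcal{T}_{i}$ is diagonal and transposition turns $(\mathcal{T}_{i}\otimes G^{T})^{j}\otimes\mathcal{T}_{i}$ into $\mathcal{T}_{i}\otimes(G\otimes\mathcal{T}_{i})^{j}$). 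So $A_{lk}^{T}=\bigotimes_{i=l+1}^{k}\bigoplus_{j=0}^{p}\Phi_{j}(\mathcal{T}_{i})$, and likewise for $A_{lr}^{T}$ and $A_{rk}^{T}$.

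Next I would apply the distributivity identity (\ref{E-DMOA}) to each of the three products, rewriting $A_{lk}^{T}$ as a $(\max,+)$-sum over all tuples $(m_{l+1},\ldots,m_{k})$ with $0\le m_{i}\le p$ of $\bigotimes_{i=l+1}^{k}\Phi_{m_{i}}(\mathcal{T}_{i})$, and similarly $A_{lr}^{T}$ as a supremum over tuples indexed by $l+1,\ldots,r$ and $A_{rk}^{T}$ over tuples indexed by $r+1,\ldots,k$. Because $\oplus$ is the pointwise maximum, to prove $A_{lk}^{T}\le A_{lr}^{T}+A_{rk}^{T}$ it suffices to show that \emph{every} term $\bigotimes_{i=l+1}^{k}\Phi_{m_{i}}(\mathcal{T}_{i})$ appearing in the expansion of the left side is dominated by the sum of \emph{some} term from the $A_{lr}^{T}$-expansion and \emph{some} term from the $A_{rk}^{T}$-expansion. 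The natural choice is to split the tuple at the index $r$: take the $A_{lr}^{T}$-term with exponents $m_{l+1},\ldots,m_{r}$ and the $A_{rk}^{T}$-term with exponents $m_{r+1},\ldots,m_{k}$. With this choice the required inequality is exactly
\[
\bigotimes_{i=l+1}^{k}\Phi_{m_{i}}(\mathcal{T}_{i})
\;\le\;
\bigotimes_{i=l+1}^{r}\Phi_{m_{i}}(\mathcal{T}_{i})
+
\bigotimes_{i=r+1}^{k}\Phi_{m_{i}}(\mathcal{T}_{i}),
\]
which I would derive from Lemma~\ref{L-DO2S} together with the subadditivity-type bound (\ref{I-DAOM}): Lemma~\ref{L-DO2S} bounds each side in terms of sums of $\Psi$-matrices over disjoint ranges of the splitting index, and those ranges concatenate correctly because $M_{i}$ for the full product restricted to $\{l+1,\ldots,r\}$ and then to $\{r+1,\ldots,k\}$ tile the interval $[0,m]$. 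Alternatively, since the $\mathcal{T}_{i}$ are diagonal with nonnegative entries (service times), one can invoke Lemma~\ref{L-DPLD} or simply chain (\ref{E-GABG})/(\ref{E-DPOA}) along the product; the cleanest route is the term-by-term argument just sketched.

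The main obstacle I anticipate is bookkeeping rather than conceptual: one must be careful that transposition is handled consistently (the $\Phi$-notation is built on $G$, so $A^{T}(i)$ must be expressed via $G$ and not $G^{T}$), and that the index-splitting at $r$ genuinely produces admissible terms in the expansions of $A_{lr}^{T}$ and $A_{rk}^{T}$ — i.e. that no constraint on the exponents is violated by restricting a global tuple to a subinterval. Once the term-by-term domination is in place, taking $\oplus$ over all tuples on the left and using that $A_{lr}^{T}+A_{rk}^{T}$ is an upper bound for each individual sum of terms finishes the proof.
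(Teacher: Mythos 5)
Your overall strategy --- expand all three matrices with (\ref{E-DMOA}) and (\ref{E-DAOM}) and dominate each term of $A_{lk}^{T}$ by the sum of one term from the expansion of $A_{lr}^{T}$ and one from that of $A_{rk}^{T}$ --- is the paper's, but the specific matching you commit to is wrong: the step you call ``the cleanest route'' is false. The inequality
$$
\bigotimes_{i=l+1}^{k}\Phi_{m_{i}}({\cal T}_{i})
\leq
\bigotimes_{i=l+1}^{r}\Phi_{m_{i}}({\cal T}_{i})
+
\bigotimes_{i=r+1}^{k}\Phi_{m_{i}}({\cal T}_{i})
$$
does not hold in general. The matrix $\bigotimes_{i=l+1}^{r}\Phi_{m_{i}}({\cal T}_{i})$ has finite entries only at pairs $(u,v)$ joined in $G$ by a path of length $m_{l+1}+\cdots+m_{r}$, and since $+$ is ordinary addition with $\varepsilon=-\infty$ absorbing, the right-hand side is $\varepsilon$ wherever either partial product is. Take $G$ to be the chain $1\to 2\to 3$, $k-l=2$, $r=l+1$, $m_{l+1}=m_{l+2}=1$: the left-hand side has a finite $(1,3)$ entry (the unique length-$2$ path), while each $\Phi_{1}({\cal T}_{i})$ is $\varepsilon$ there, so the right-hand side is $\varepsilon$ at $(1,3)$. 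Your fallback of deriving the inequality from Lemma~\ref{L-DO2S} together with (\ref{I-DAOM}) cannot work either: those results give upper bounds on both sides, and upper-bounding both sides of a desired inequality proves nothing --- here the inequality is simply false.

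The device you mention only in passing --- ``invoke Lemma~\ref{L-DPLD}'' --- is in fact the heart of the matter, and it forces a different matching than restriction of the tuple to the two subintervals. Given a target term of $A_{lk}^{T}$ with exponents $s_{l+1},\ldots,s_{k}$ summing to $m\leq p$ (terms with larger exponent sum vanish because $G^{q}={\cal E}$ for $q>p$), one must choose the term of $A_{lr}^{T}$ with exponents $s_{l+1},\ldots,s_{r-1}$ and $m_{r}=m-(s_{l+1}+\cdots+s_{r-1})$, and the term of $A_{rk}^{T}$ with $m_{r+1}=m-(s_{r+2}+\cdots+s_{k})$ and $s_{r+2},\ldots,s_{k}$, so that \emph{both} partial products have exponent sum exactly $m$ and hence the same support as the target term; Lemma~\ref{L-DPLD} with $s=s_{l+1}+\cdots+s_{r}$ (whose proof uses the nonnegativity of the diagonal entries of the $ {\cal T}_{i}$) then delivers the domination. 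That inflated matching, followed by taking $\oplus$ over all admissible tuples, is precisely the paper's proof; without it your argument has a genuine gap.
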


\begin{proof}
By applying (\ref{E-DMOA}) and (\ref{E-DAOM}), and then Lemma~\ref{L-DPLD}, we 
have
\begin{multline*}
A_{lr}^{T}+A_{rk}^{T}
=
\bigotimes_{i=l+1}^{r}\bigoplus_{j=0}^{p}\Phi_{j}({\cal T}_{i})
+
\bigotimes_{i=r+1}^{k}\bigoplus_{j=0}^{p}\Phi_{j}({\cal T}_{i})
\\
\geq
\bigoplus_{m=0}^{p}\bigoplus_{s_{l+1}+\cdots+s_{k}=m}
\bigotimes_{i=l+1}^{k}\Phi_{s_{i}}({\cal T}_{i}).
\end{multline*}

Finally, since $ G^{m}={\cal E} $ for all $ m>p $, we get
$$
A_{lr}^{T}+A_{rk}^{T}
\geq
\bigoplus_{0\leq s_{l+1},\ldots,s_{k}\leq p}
\bigotimes_{i=l+1}^{k}\Phi_{s_{i}}({\cal T}_{i})
=
\bigotimes_{i=l+1}^{k}\bigoplus_{j=0}^{p}\Phi_{j}({\cal T}_{i})
=
A_{lk}^{T}.
\qedhere
$$
\end{proof}

The next lemma offers bounds on $ A_{k}^{T} $.
\begin{lemma}\label{L-LUBA}
It holds that
$$
\bigoplus_{r=0}^{\lfloor p/k\rfloor}
\bigotimes_{i=1}^{k}\Phi_{r}({\cal T}_{i})
\leq
A_{k}^{T}
\leq
\Big\|\bigoplus_{i=1}^{k}{\cal T}_{i}\Big\|\otimes
\bigoplus_{r=1}^{p}G^{r} 
+
\sum_{i=1}^{k}\bigoplus_{0\leq r+s\leq p}\Psi_{r}^{s}({\cal T}_{i}),
$$
where $ \lfloor r\rfloor $ denotes the greatest integer equal to or less
than $ r $.
\end{lemma}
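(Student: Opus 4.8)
The plan is to establish the lower and upper bounds separately, in each case reducing the statement to one of the distributivity identities or lemmas from Section~\ref{S-DIPR}. For the lower bound, I would start from the defining expansion
$$
A_{k}^{T}
=
\bigotimes_{i=1}^{k}\bigoplus_{j=0}^{p}\Phi_{j}({\cal T}_{i}),
$$
and apply inequality (\ref{I-DMOA}) with the index choice $j_{1}=\cdots=j_{k}=r$. This keeps only the ``diagonal'' terms $\bigotimes_{i=1}^{k}\Phi_{r}({\cal T}_{i})$ in the full expansion (\ref{E-DMOA}). The only point requiring care is the range of $r$: since $\Phi_{r}({\cal T}_{i})=\mathcal{T}_{i}\otimes(G\otimes\mathcal{T}_{i})^{r}$ contains the factor $G^{r}$ (up to conjugation by diagonal matrices), and $G^{q}=\mathcal{E}$ for $q>p$, the product $\bigotimes_{i=1}^{k}\Phi_{r}({\cal T}_{i})$ will involve $G^{r}$ raised to a total power $kr$ in its longest monomials, so it vanishes (equals $\mathcal{E}$) once $kr>p$, i.e.\ once $r>\lfloor p/k\rfloor$. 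Hence only the terms with $0\le r\le\lfloor p/k\rfloor$ survive, which gives exactly the claimed lower bound.

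For the upper bound, the idea is to push the product all the way down to a sum of the elementary blocks $\Psi_{r}^{s}({\cal T}_{i})$ using Lemma~\ref{L-DO2S}, and then absorb the ``cross'' conjugation terms into the scalar prefactor. Concretely, first rewrite
$$
A_{k}^{T}
=
\bigotimes_{i=1}^{k}\bigoplus_{j=0}^{p}\Phi_{j}({\cal T}_{i})
=
\bigoplus_{0\le j_{1},\ldots,j_{k}\le p}
\bigotimes_{i=1}^{k}\Phi_{j_{i}}({\cal T}_{i})
$$
by (\ref{E-DMOA}), and for each fixed tuple $(j_{1},\ldots,j_{k})$ apply Lemma~\ref{L-DO2S} with $m_{i}=j_{i}$ and $m=j_{1}+\cdots+j_{k}$. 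This bounds each $\bigotimes_{i=1}^{k}\Phi_{j_{i}}({\cal T}_{i})$ above by $\sum_{i=1}^{k}\sum_{l=M_{i-1}}^{M_{i}}\Psi_{l}^{m-l}({\cal T}_{i})$. Taking $\oplus$ over all tuples and using (\ref{I-DAOM}) to pull the $\oplus$ inside the sum over $i$, one arrives at a bound of the form $\sum_{i=1}^{k}\bigoplus \Psi_{r}^{s}({\cal T}_{i})$ where the inner $\oplus$ ranges over those $(r,s)$ reachable from some admissible tuple. The surviving exponents satisfy $r+s\le p$ except possibly for the ``middle'' block of each $i$, where the total power $m=r+s$ of $G$ can exceed $p$; since $G^{r+s}=\mathcal{E}$ whenever $r+s>p$, those extra blocks contribute a matrix of the form $D\otimes G^{q}$ with $q\le p$ and $D$ a diagonal matrix built from the $\mathcal{T}_{i}$'s, which is dominated entrywise by $\|\bigoplus_{i=1}^{k}\mathcal{T}_{i}\|\otimes\bigoplus_{r=1}^{p}G^{r}$. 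Collecting the two contributions yields the stated upper bound.

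The main obstacle I anticipate is the bookkeeping in the upper-bound argument: tracking exactly which pairs $(r,s)$ with $r+s$ possibly larger than $p$ arise from Lemma~\ref{L-DO2S} as the tuple $(j_{1},\ldots,j_{k})$ varies, and verifying that every such ``overflow'' block is genuinely of the form $({\rm diagonal})\otimes G^{q}$ with $q\le p$ so that it can be uniformly dominated by the scalar-times-$\bigoplus_{r=1}^{p}G^{r}$ term. The estimate $D\le\|\bigoplus_{i}\mathcal{T}_{i}\|\otimes E$ for any diagonal $D$ whose entries are among the $\tau_{il}$'s is elementary, but one must be careful that after the conjugations $G^{r}\otimes\mathcal{T}_{i}\otimes G^{s}$ the ``diagonal part'' is still controlled by the same scalar; this follows because $G$ has only $0$ and $\varepsilon$ entries, so conjugation by powers of $G$ cannot increase the diagonal magnitude beyond a sum of the $\tau$-entries already bounded by $\|\bigoplus_{i}\mathcal{T}_{i}\|$ after accounting for the path-length factor absorbed into $\bigoplus_{r=1}^{p}G^{r}$. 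Once this domination is in place, the rest is a routine reorganization of $\oplus$-sums.
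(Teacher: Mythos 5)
Your lower-bound argument coincides with the paper's: apply (\ref{I-DMOA}) to retain the diagonal terms $\bigotimes_{i=1}^{k}\Phi_{r}(\mathcal{T}_{i})$ of the expansion (\ref{E-DMOA}) and note that these vanish once $kr>p$, so truncating the range at $\lfloor p/k\rfloor$ loses nothing. No issues there.

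The upper bound follows the paper's strategy in outline (expand by (\ref{E-DMOA}), bound each $\bigotimes_{i}\Phi_{m_{i}}(\mathcal{T}_{i})$ by Lemma~\ref{L-DO2S}, reorganize with (\ref{I-DAOM})), but your bookkeeping goes wrong at the decisive step: deciding which of the blocks $\Psi_{j}^{m-j}(\mathcal{T}_{i})$, $M_{i-1}\le j\le M_{i}$, delivered by Lemma~\ref{L-DO2S} must be absorbed by the prefactor term $\|\bigoplus_{i}\mathcal{T}_{i}\|\otimes\bigoplus_{r=1}^{p}G^{r}$. You propose to absorb the ``overflow'' blocks with $r+s>p$; but those satisfy $\Psi_{r}^{s}(\mathcal{T}_{i})\le\|\mathcal{T}_{i}\|\otimes G^{r+s}=\mathcal{E}$, so they simply vanish and need no absorption --- they are not of the form $D\otimes G^{q}$ with $q\le p$. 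The actual difficulty is different: Lemma~\ref{L-DO2S} produces $m_{i}+1$ blocks for each factor $i$ (hence $m+k$ in total), while the second term of the stated bound, $\sum_{i}\bigoplus_{0\le r+s\le p}\Psi_{r}^{s}(\mathcal{T}_{i})$, has room for exactly one block per factor. The paper therefore splits $\sum_{j=M_{i-1}}^{M_{i}}$ into the single term $j=M_{i-1}$ --- whose $\oplus$ over all tuples, after (\ref{E-DAOM}) and (\ref{I-DAOM}) and the vanishing of pairs with $r+s>p$, yields precisely $\sum_{i}\bigoplus_{0\le r+s\le p}\Psi_{r}^{s}(\mathcal{T}_{i})$ --- and the remaining $m$ terms with $j\ge M_{i-1}+1\ge 1$, each dominated through $\|\bigoplus_{i}\mathcal{T}_{i}\|\otimes G^{m}$ and dumped into the prefactor term. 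Your write-up never isolates this one-block-per-factor contribution, so as it stands it cannot arrive at the stated right-hand side. (A separate remark: dominating an ordinary sum of up to $m\le p$ such leftover blocks by a single copy of $\|\bigoplus_{i}\mathcal{T}_{i}\|\otimes\bigoplus_{r=1}^{p}G^{r}$, rather than by $p$ times that quantity, is a point the paper itself passes over as ``obvious''; your heuristic inherits the same looseness, but that is not the gap flagged above.)
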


\begin{proof}
The lower bound is an immediate consequence from (\ref{I-DMOA}), and the
condition that $ G^{m}={\cal E} $ if $ m=kr>p $.

In order to derive the upper bound, we first apply (\ref{E-DMOA}) to write
$$
A_{k}^{T}
=
\bigoplus_{0\leq m_{1},\ldots,m_{k}\leq p}
\bigotimes_{i=1}^{k}\Phi_{m_{i}}({\cal T}_{i}).
$$

Application of Lemma~\ref{L-DO2S} gives
\begin{multline*}
\bigotimes_{i=1}^{k}\Phi_{m_{i}}({\cal T}_{i})
\leq
\sum_{i=1}^{k}\sum_{j=M_{i-1}}^{M_{i}}\Psi_{j}^{m-j}({\cal T}_{i})
\\
=
\sum_{i=1}^{k}\sum_{j=M_{i-1}+1}^{M_{i}}\!\Psi_{j}^{m-j}({\cal T}_{i})
+
\sum_{i=1}^{k}\Psi_{M_{i-1}}^{m-M_{i-1}}({\cal T}_{i}).
\end{multline*}

With (\ref{I-DAOM}), we further obtain
$$
A_{k}^{T}
\leq
\bigoplus_{0\leq m_{1},\ldots,m_{k}\leq p}
\sum_{i=1}^{k}\sum_{j=M_{i-1}+1}^{M_{i}}\Psi_{j}^{m-j}({\cal T}_{i})
+
\bigoplus_{0\leq m_{1},\ldots,m_{k}\leq p}
\sum_{i=1}^{k}\Psi_{M_{i-1}}^{m-M_{i-1}}({\cal T}_{i}).
$$

It remains to replace the first sum with its obvious upper bound, and then 
apply (\ref{E-DAOM}) and (\ref{I-DAOM}) to the second sum so as to get the
desired result.
\end{proof}

\section{Evaluation of Bounds on the Cycle Time}\label{S-EBCT}
The next statement follows from the classical result in \cite{Kingman1973Subadditive}, 
combined with Lemma~\ref{L-SUBA}.
\begin{theorem}\label{T-EXIS}
If $ \tau_{i1},\tau_{i2},\ldots $, are i.i.d. r.v.'s with
$ \mbox{\rm E}[\tau_{i1}]<\infty $ for each $ i=1,\ldots,n $, then there 
exists a fixed matrix $ A $ such that with probability 1,
$$
\lim_{k\to\infty}A_{k}^{T}/k=A^{T}, 
\quad \mbox{and} \quad
\lim_{k\to\infty}\mbox{\rm E}[A_{k}^{T}]/k=A^{T}.
$$
\end{theorem}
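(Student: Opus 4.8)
The plan is to invoke Kingman's subadditive ergodic theorem, with Lemma~\ref{L-SUBA} supplying the subadditivity hypothesis and the moment bound of Lemma~\ref{L-LUBA} (together with the i.i.d.\ assumption) supplying the required integrability. Since $(\max,+)$-matrix entries are real-valued (or $\varepsilon$), one must apply the scalar Kingman theorem entrywise: fix a pair of indices, and consider the real-valued array $X_{lk}=(A_{lk}^{T})_{\alpha\beta}$ for $l<k$. From Lemma~\ref{L-SUBA} one reads off $X_{lk}\le X_{lr}+X_{rk}$ for $l<r<k$, i.e.\ the array is subadditive in the sense of Kingman. Stationarity and ergodicity of the shift follow because $A(k)$ depends only on $\mathcal{T}_k=\operatorname{diag}(\tau_{1k},\dots,\tau_{nk})$, and the service-time triples $(\tau_{1k},\dots,\tau_{nk})$ are i.i.d.\ across $k$; hence $X_{l+1,k+1}$ has the same law as $X_{lk}$ jointly, and the underlying i.i.d.\ sequence is ergodic under the shift.

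Next I would check the one-sided integrability condition. Kingman's theorem requires $\operatorname{E}[X_{0k}^{+}/k]$ to be bounded (equivalently $\operatorname{E}[X_{01}]<\infty$) and $\inf_k \operatorname{E}[X_{0k}]/k>-\infty$. For the upper side, the upper bound in Lemma~\ref{L-LUBA} expresses $A_k^{T}$ entrywise in terms of finitely many sums and maxima of the entries $\tau_{ij}$ and fixed constants coming from $G^{r}$; since $\operatorname{E}[\tau_{i1}]<\infty$ for every $i$ and there are only $n$ indices, $\operatorname{E}[(A_k^{T})_{\alpha\beta}]$ is finite and in fact $O(k)$, so $\operatorname{E}[X_{0k}^{+}]/k$ is bounded. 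For the lower side, every entry of $A_k^{T}$ is bounded below by a corresponding entry of $\bigotimes_{i=1}^{k}\mathcal{T}_i$ or of some $\Phi_r(\mathcal{T}_i)$ product (the lower bound of Lemma~\ref{L-LUBA}), hence by a sum of $\tau_{ij}$'s and finite constants, which is $\ge$ some finite multiple of $k$ times a constant times $-\infty$ only if some $\tau$ can be $\varepsilon$; but the $\tau_{ik}$ are assumed nonnegative real, so these lower bounds are finite, giving $\inf_k\operatorname{E}[X_{0k}]/k>-\infty$. Kingman's theorem then yields a constant $a_{\alpha\beta}$ with $X_{0k}/k\to a_{\alpha\beta}$ a.s.\ and in $L^1$; define $(A^{T})_{\alpha\beta}=a_{\alpha\beta}$. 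Assembling the $n^2$ entrywise limits (a finite collection, so the a.s.\ convergence is simultaneous on a single full-measure event) gives the matrix limit $A_k^{T}/k\to A^{T}$ a.s.\ and $\operatorname{E}[A_k^{T}]/k\to A^{T}$.

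The main obstacle, and the step deserving the most care, is verifying the integrability hypotheses rigorously rather than waving at Lemma~\ref{L-LUBA}: one must confirm that the upper bound there is genuinely a finite linear combination (under $+$, $\oplus$) of the $\tau_{ik}$ with deterministic coefficients, so that taking $\operatorname{E}[\cdot]$ and using $\operatorname{E}[\tau_{i1}]<\infty$ is legitimate, and that $\left\|\bigoplus_{i=1}^{k}\mathcal{T}_i\right\|$ has finite mean (it is $\max_{i\le n, j\le k}\tau_{ij}$, whose expectation is at most $\sum_{i,j}\operatorname{E}[\tau_{ij}]=k\sum_i\operatorname{E}[\tau_{i1}]<\infty$). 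A secondary technical point is confirming that the hypotheses of the \emph{subadditive} (as opposed to merely additive) ergodic theorem are met with the family indexed by pairs $l<k$ rather than by a single index; this is exactly the classical setting of \cite{Kingman1973Subadditive}, so once subadditivity (Lemma~\ref{L-SUBA}), stationarity, ergodicity, and the moment conditions are in place, the conclusion is immediate. I would close by noting that $\gamma=\lim_k\|\bm{x}(k)\|/k$ then exists and equals $\max_{i}(A\otimes\bm{x}(0))_i$-type quantity, i.e.\ the maximal $(\max,+)$ Lyapunov exponent, linking the theorem back to the cycle-time interpretation.
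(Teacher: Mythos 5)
Your proposal is correct and follows exactly the route the paper intends: the paper offers no written proof beyond the remark that the theorem "follows from the classical result in Kingman combined with Lemma~\ref{L-SUBA}," and you have simply filled in the standard verification (entrywise application of the subadditive ergodic theorem, stationarity from the i.i.d.\ service times, integrability from Lemma~\ref{L-LUBA} and the nonnegativity of the $\tau_{ik}$). The only point worth tightening is the treatment of entries that equal $\varepsilon$: these are determined by the (deterministic, $k$-independent) support pattern $E\oplus G^{T}\oplus\cdots\oplus(G^{T})^{p}$, so Kingman applies to the finite entries and the $\varepsilon$ entries converge trivially; note also that the lower bound of Lemma~\ref{L-LUBA} is $\varepsilon$ off the diagonal, so for those entries the needed lower integrability comes from $\tau_{ik}\geq 0$ rather than from that lemma.
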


Furthermore, application of Lemma~\ref{L-LUBA} together with asymptotic 
results in \cite{Gumbel1954Themaxima,Hartley1954Universal} gives us the next theorem.
\begin{theorem}\label{T-LULA}
If in addition to the conditions of Theorem~\ref{T-EXIS}, 
$ \mbox{\rm D}[\tau_{i1}]<\infty $ for each $ i=1,\ldots,n $, then it
holds
\begin{equation}\label{I-LULA}
\mbox{\rm E}[{\cal T}_{1}]
\leq A^{T}\leq
\mbox{\rm E}\Bigg[\bigoplus_{0\leq r+s\leq p}
G^{r}\otimes{\cal T}_{1}\otimes G^{s}\Bigg].
\end{equation}
\end{theorem}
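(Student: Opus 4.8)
The plan is to divide by $k$ and pass to the limit in the two-sided bound of Lemma~\ref{L-LUBA}, invoking Theorem~\ref{T-EXIS} on the left. For the lower bound, observe that the inequality $\bigoplus_{r=0}^{\lfloor p/k\rfloor}\bigotimes_{i=1}^{k}\Phi_{r}(\mathcal{T}_{i})\leq A_{k}^{T}$ already gives, by dropping all terms except $r=0$ (which is legitimate since $\oplus$ is a maximum), the bound $\bigotimes_{i=1}^{k}\mathcal{T}_{i}\leq A_{k}^{T}$; but $\bigotimes_{i=1}^{k}\mathcal{T}_{i}$ is the diagonal matrix whose $j$th diagonal entry is $\sum_{i=1}^{k}\tau_{ji}$. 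Dividing by $k$ and using the strong law of large numbers on each diagonal entry (here $\mathrm{E}[\tau_{j1}]<\infty$ suffices) yields $A^{T}\geq\mathrm{diag}(\mathrm{E}[\tau_{11}],\ldots,\mathrm{E}[\tau_{n1}])=\mathrm{E}[\mathcal{T}_{1}]$ almost surely, hence also in expectation by Theorem~\ref{T-EXIS}.

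For the upper bound, start from
\[
A_{k}^{T}\leq\Big\|\bigoplus_{i=1}^{k}\mathcal{T}_{i}\Big\|\otimes\bigoplus_{r=1}^{p}G^{r}+\sum_{i=1}^{k}\bigoplus_{0\leq r+s\leq p}\Psi_{r}^{s}(\mathcal{T}_{i}).
\]
Divide by $k$. In the second (external-sum) term, $\frac1k\sum_{i=1}^{k}\bigoplus_{0\leq r+s\leq p}G^{r}\otimes\mathcal{T}_{i}\otimes G^{s}$ is a Cesàro average of i.i.d.\ matrix-valued summands; since each entry of $G^{r}\otimes\mathcal{T}_{1}\otimes G^{s}$ is a finite max of shifted copies of the $\tau_{j1}$'s and these have finite mean, the strong law of large numbers (applied entrywise) gives convergence a.s.\ and in $L^{1}$ to $\mathrm{E}\big[\bigoplus_{0\leq r+s\leq p}G^{r}\otimes\mathcal{T}_{1}\otimes G^{s}\big]$. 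It remains to show the first term is asymptotically negligible, i.e.\ $\frac1k\big\|\bigoplus_{i=1}^{k}\mathcal{T}_{i}\big\|\to0$ a.s.\ and in expectation. Now $\big\|\bigoplus_{i=1}^{k}\mathcal{T}_{i}\big\|=\max_{j}\max_{1\leq i\leq k}\tau_{ji}$, so one must control the growth of the running maximum of $k$ i.i.d.\ nonnegative variables with finite variance; the cited extreme-value estimates of \cite{Gumbel1954Themaxima,Hartley1954Universal} bound $\mathrm{E}[\max_{i\leq k}\tau_{ji}]=o(k)$ (indeed $O(\sqrt{k})$ under a second moment), and a Borel--Cantelli argument gives the a.s.\ statement; dividing by $k$ kills it. Combining the two limits and noting that the finite term $\bigoplus_{r=1}^{p}G^{r}$ has all entries in $\{0,\varepsilon\}$ so that $o(1)\otimes\bigoplus_{r=1}^{p}G^{r}\to\mathcal{E}$, we obtain $A^{T}\leq\mathrm{E}\big[\bigoplus_{0\leq r+s\leq p}G^{r}\otimes\mathcal{T}_{1}\otimes G^{s}\big]$, which together with the lower bound is exactly \eqref{I-LULA}.

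The main obstacle is the first term of the upper bound: one needs that the normalized running maximum $\frac1k\max_{i\leq k}\tau_{ji}$ vanishes, both almost surely and in mean. The a.s.\ part is routine Borel--Cantelli once one has a moment bound, but the convergence in expectation is where the finite-variance hypothesis $\mathrm{D}[\tau_{i1}]<\infty$ is genuinely used — this is precisely what the asymptotic results of \cite{Gumbel1954Themaxima,Hartley1954Universal} supply, giving $\mathrm{E}[\max_{i\leq k}\tau_{ji}]=O(\sqrt{k})$ and hence the desired $o(k)$ behaviour. Everything else is bookkeeping: passing limits through finite $\oplus$'s and finite external sums, which is justified because all the index sets ($0\leq r+s\leq p$, $1\leq r\leq p$) are finite and fixed, independent of $k$.
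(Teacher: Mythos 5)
Your proposal follows exactly the route the paper intends (the paper gives no detailed proof of Theorem~\ref{T-LULA}, only the remark that Lemma~\ref{L-LUBA} combined with the extreme-value asymptotics of \cite{Gumbel1954Themaxima,Hartley1954Universal} yields it): divide the two-sided bound of Lemma~\ref{L-LUBA} by $k$, apply the strong law to the diagonal product $\bigotimes_{i=1}^{k}\mathcal{T}_{i}$ and to the Ces\`aro average of the i.i.d.\ matrices $\bigoplus_{0\leq r+s\leq p}\Psi_{r}^{s}(\mathcal{T}_{i})$, and use $\mathrm{E}[\max_{i\leq k}\tau_{ji}]=O(\sqrt{k})$ for the running-maximum term, which is where the finite-variance hypothesis enters. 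One cosmetic slip: $o(1)\otimes\bigoplus_{r=1}^{p}G^{r}$ tends to $\bigoplus_{r=1}^{p}G^{r}$ (entries $0$ or $\varepsilon$), not to $\mathcal{E}$, but since adding a matrix with entries in $\{0,\varepsilon\}$ entrywise can only preserve or decrease entries, the claimed upper bound is unaffected.
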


As a consequence, we have the next lemma.
\begin{lemma}\label{L-LUBG}
Under the conditions of Theorem~\ref{T-LULA}, for any finite vector 
$ \bm{x}(0) $, it holds
$$
\|\mbox{\rm E}[{\cal T}_{1}]\|
\leq\gamma\leq
\Bigg\|\mbox{\rm E}\Bigg[\bigoplus_{0\leq r+s\leq p}
G^{r}\otimes{\cal T}_{1}\otimes G^{s}\Bigg]\Bigg\|.
$$
\end{lemma}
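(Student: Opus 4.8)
The goal is to squeeze $\gamma$ between two norms by passing from the matrix bounds in Theorem~\ref{T-LULA} to scalar bounds. The key observation is that the cycle time $\gamma=\lim_{k\to\infty}\|\bm{x}(k)\|/k$ can be rewritten in terms of the limiting matrix $A$. Since $\bm{x}(k)=A_k\otimes\bm{x}(0)$ and $\bm{x}(0)$ is finite, the growth rate $\|\bm{x}(k)\|/k$ should have the same limit as $\|A_k\|/k$; one direction is immediate from monotonicity of $\otimes$ and finiteness of $\bm{x}(0)$, and the other follows because $A_k\otimes\bm{x}(0)\geq A_k\otimes(\min_j x_j(0)\cdot\mathbb{1})$ in the $(\max,+)$ sense, so $\|A_k\otimes\bm{x}(0)\|\geq\|A_k\|+\min_j x_j(0)$. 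Hence $\gamma=\lim_{k\to\infty}\|A_k\|/k=\lim_{k\to\infty}\|A_k^T\|/k$ (transposition does not change the maximal entry), and by Theorem~\ref{T-EXIS} this equals $\|A^T\|$, interpreting $\|\cdot\|$ as the maximal entry of the matrix.

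First I would establish the identity $\gamma=\|A^T\|$ rigorously, using the almost-sure convergence $A_k^T/k\to A^T$ from Theorem~\ref{T-EXIS} together with the sandwiching of $\|\bm{x}(k)\|$ described above; the finiteness of $\bm{x}(0)$ is exactly what is needed here, and it is also plausible that one must check $A$ has no $\varepsilon$-rows so that $\|A^T\|$ is finite, which follows from the lower bound $A^T\geq\mathrm{E}[{\cal T}_1]$ since the diagonal of ${\cal T}_1$ is finite. Next I would apply the operator $\|\cdot\|$ to both inequalities in (\ref{I-LULA}). Monotonicity of the map $B\mapsto\|B\|=\max_{ij}b_{ij}$ with respect to the entrywise $(\max,+)$-order (which is just the entrywise numerical order) immediately yields $\|\mathrm{E}[{\cal T}_1]\|\leq\|A^T\|\leq\big\|\mathrm{E}[\bigoplus_{0\leq r+s\leq p}G^r\otimes{\cal T}_1\otimes G^s]\big\|$. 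Substituting $\gamma=\|A^T\|$ finishes the proof.

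**Main obstacle.** The only nontrivial point is the first step: justifying $\gamma=\|A^T\|$, i.e.\ that the growth rate of the vector iterate coincides with the growth rate of the matrix product and hence with the maximal entry of the limiting matrix. The inequality $\|\bm{x}(k)\|\leq\|A_k\|+\|\bm{x}(0)\|$ handles the upper side; for the lower side one uses that every finite $\bm{x}(0)$ dominates the constant vector $c\mathbb{1}$ with $c=\min_j x_j(0)>-\infty$, so $\|\bm{x}(k)\|\geq\|A_k\otimes c\mathbb{1}\|=\|A_k\|+c$. Dividing by $k$ and letting $k\to\infty$, both bounds converge to $\lim\|A_k\|/k$, which by Theorem~\ref{T-EXIS} is $\|A^T\|$. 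Everything after that is a one-line application of monotonicity of $\|\cdot\|$ to the matrix inequalities already proved.
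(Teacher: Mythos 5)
Your proof is correct, and since the paper states Lemma~\ref{L-LUBG} only ``as a consequence'' of Theorem~\ref{T-LULA} without writing out a proof, your argument fills in exactly the intended route: sandwich $\|\bm{x}(k)\| = \|A_{k}\otimes\bm{x}(0)\|$ between $\|A_{k}\|+\min_{j}x_{j}(0)$ and $\|A_{k}\|+\|\bm{x}(0)\|$ using finiteness of $\bm{x}(0)$, conclude $\gamma=\|A^{T}\|$ from Theorem~\ref{T-EXIS}, and apply the monotone, transpose-invariant max-entry norm to both sides of (\ref{I-LULA}). The one detail worth making explicit is the finiteness of $\|A_{k}\|$ and of the limit, which you correctly trace back to $A^{T}\geq\mbox{\rm E}[{\cal T}_{1}]$ having a finite diagonal.
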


\bibliographystyle{utphys}

\bibliography{Products_of_random_matrices_and_queueing_system_performance_evaluation}

\providecommand{\href}[2]{#2}\begingroup\raggedright\begin{thebibliography}{1}

\bibitem{Vorobjev1967Extremal}
N.~N. Vorobjev, ``Algebra of positive matrices,'' {\em Elektronische
  Informationsverarbeitung und Kybernetik} {\bfseries 3} no.~1, (1967) 39--71.
  (in Russian).

\bibitem{Cuninghame-Green1979Minimax}
R.~Cuninghame-Green, {\em Minimax Algebra}, vol.~166 of {\em Lecture Notes in
  Economics and Mathematical Systems}.
\newblock Springer, Berlin, 1979.

\bibitem{Baccelli1993Synchronization}
F.~L. Baccelli, G.~Cohen, G.~J. Olsder, and J.-P. Quadrat, {\em Synchronization
  and Linearity: An Algebra for Discrete Event Systems}.
\newblock Wiley Series in Probability and Statistics. Wiley, Chichester, 1993.
\newblock \url{http://www-rocq.inria.fr/metalau/cohen/documents/BCOQ-book.pdf}.

\bibitem{Maslov1994Idempotent}
V.~Maslov and V.~Kolokoltsov, {\em Idempotent Analysis and Its Applications to
  Optimal Control Theory}.
\newblock Nauka, Moscow, 1994.
\newblock (in Russian).

\bibitem{Krivulin2000Algebraic}
N.~K. Krivulin, ``Algebraic modeling and performance evaluation of acyclic
  fork-join queueing networks,'' in {\em Advances in Stochastic Simulation
  Methods}, N.~Balakrishnan, V.~B. Melas, and S.~Ermakov, eds., Statistics for
  Industry and Technology, pp.~63--81.
\newblock Birkh\"{a}user, Boston, 2000.
\newblock \href{http://arxiv.org/abs/1212.4648}{{\ttfamily arXiv:1212.4648
  [math.OC]}}.

\bibitem{Kingman1973Subadditive}
J.~F.~C. Kingman, ``Subadditive ergodic theory,'' {\em Ann. Probab.} {\bfseries
  1} no.~6, (December, 1973) 883--899.

\bibitem{Gumbel1954Themaxima}
E.~J. Gumbel, ``The maxima of the mean largest value and of the range,'' {\em
  Ann. Math. Stat.} {\bfseries 25} no.~1, (1954) 76--84.
  \url{http://www.jstor.org/stable/2236513}.

\bibitem{Hartley1954Universal}
H.~O. Hartley and H.~A. David, ``Universal bounds for mean range and extreme
  observation,'' {\em Ann. Math. Stat.} {\bfseries 25} no.~1, (1954) 85--99.
  \url{http://www.jstor.org/stable/2236514}.

\end{thebibliography}\endgroup

\end{document}